\documentclass[11pt]{article}

\usepackage[margin=1in]{geometry}
\usepackage{graphicx}
\usepackage{amsmath}
\usepackage{amssymb}
\usepackage{amsthm}

\newtheorem{proposition}{Proposition}
\usepackage{booktabs}
\usepackage{hyperref}
\usepackage{natbib}
\usepackage{caption}
\usepackage{xcolor}

\title{Random Indexing for Image Change Detection: \\
A Distance-Threshold Vocabulary Approach}

\author{Cristiano Tamborrino \\ Dipartimento di Informatica, Università degli Studi di Bari Aldo Moro}
\date{}

\begin{document}
\maketitle

\begin{abstract}
Random Indexing (RI) is a lightweight, incremental alternative to learned embeddings that has been used almost exclusively in text analysis, most notably in Temporal Random Indexing (TRI) for tracking word meaning over time. This paper explores whether the same mechanism -- assigning fixed random vectors to a discrete vocabulary and accumulating context by vector summation -- can be transplanted to the problem of change detection in multitemporal images. We show that a na\"ive transplant, in which the visual vocabulary is built with $k$-means clustering of pixel spectra, is unstable: even in unchanged regions, pixels are frequently reassigned to a different cluster between acquisition dates because of small radiometric shifts, which destroys the pixel-to-vector correspondence that RI depends on. We propose instead to build the vocabulary with a distance-threshold (leader) clustering rule, which guarantees that spectrally similar pixels share the same random vector both within and across acquisitions, and we give a short formal argument -- a stability radius derived from the vocabulary's covering and packing properties -- for why this construction is provably more robust to acquisition noise than $k$-means. Combined with a spatial context-accumulation step, this yields a simple, training-free change detection pipeline. We evaluate it on four bi-temporal remote-sensing datasets spanning different sensors and scene types (irrigated agriculture, river, urban PolSAR, and a Sentinel-2 wildfire scene), and compare it against a classical Change Vector Analysis (CVA) baseline. The proposed method consistently approaches, but does not surpass, CVA. All results are validated over multiple random seeds, which exposed and let us fix a degenerate-vector failure mode in a probabilistic-sparsity variant of RI, and revealed a second, unresolved source of variability -- sensitivity to the random visitation order used to discover the leader-clustering vocabulary itself -- which we characterize but, despite three attempted corrections, were not able to eliminate, and report as the main open problem of this work. Results are cross-checked against an independently developed re-implementation of the method, which reproduces our numbers closely. We report an ablation showing the contribution of each design choice, a sensitivity analysis of the RI hyperparameters, a comparison between Otsu and Gaussian-mixture thresholding of the change score, and discuss the incremental, streaming nature of RI as a direction for future work on long time-series monitoring.
\end{abstract}

\section{Introduction}

Change detection -- identifying the pixels or regions of a scene that differ between two or more acquisitions of the same area -- is a long-standing problem in remote sensing, with applications ranging from land-cover monitoring to disaster assessment \citep{liu2019review}. Classical approaches compare spectral vectors directly (e.g.\ Change Vector Analysis, CVA \citep{malila1980}), while more recent approaches learn a feature representation with deep neural networks trained for the task \citep{wang2019getnet}.

Random Indexing (RI) \citep{kanerva1988} is a different kind of representation technique, developed in distributional semantics as a lightweight alternative to co-occurrence matrices and learned embeddings. In RI, every element of a discrete vocabulary is assigned a fixed, sparse, near-orthogonal random vector; the representation of a unit of interest (e.g.\ a word) is then obtained by \emph{summing} the random vectors of the elements that co-occur with it. Because the operation is a simple vector sum, RI representations can be built and updated incrementally, without ever revisiting the full corpus. Temporal Random Indexing (TRI) \citep{basile2014tri,caputo2015tri} extends this idea to track how the meaning of a word changes over time, by building one RI space per time period and comparing the resulting vectors with cosine similarity.

To our knowledge, RI (and TRI in particular) has been applied almost exclusively to text. This paper asks a simple question: \emph{can the same mechanism be used for change detection in images?} The analogy is tempting -- pixels (or visual words) play the role of words, spatial neighbourhoods play the role of linguistic context, and the two acquisition dates play the role of the two time periods in TRI -- but images differ from text in one crucial respect: text comes with a discrete, given vocabulary (the language's word list), whereas images are continuous and have no vocabulary until one is constructed.

Our first, na\"ive attempt built this vocabulary with $k$-means clustering of pixel spectra. It performed poorly, and diagnosing why turned out to be informative: $k$-means assigns points to whichever of $k$ fixed centroids is nearest, which means that two nearly identical spectral vectors -- for instance, the same physical location observed on two different dates with a small radiometric offset -- can easily fall on opposite sides of a Voronoi boundary and receive two different random vectors. Since RI is trying to detect \emph{genuine} change in the ratio of vector similarity between dates, this artificial instability at cluster boundaries is exactly the kind of noise that swamps the signal.

We propose a simple fix: build the vocabulary with a \emph{distance-threshold} (leader) clustering rule instead of $k$-means. A new prototype is created only when a point is farther than a fixed threshold $\varepsilon$ from every existing prototype; otherwise the point is assigned to its nearest prototype. This does not eliminate boundary effects entirely, but it ties cluster granularity directly to a distance in feature space rather than to an arbitrarily fixed number of clusters $k$, and it removes the specific pathology we observed with $k$-means. We show empirically that this single change is responsible for most of the improvement in our pipeline.

\paragraph{Contributions.} (i) We identify and diagnose a specific failure mode of $k$-means-based vocabulary construction for RI-style image representations. (ii) We propose a distance-threshold vocabulary construction that resolves it, and prove that it yields a stability radius certifying when a pixel's vocabulary assignment is invariant to inter-date acquisition noise -- a short formal argument explaining, rather than only observing, why coarser vocabularies are more robust across dates. (iii) We combine this vocabulary with spatial context accumulation into a complete, training-free change detection pipeline, and evaluate it on four bi-temporal datasets from different sensors and scene types, comparing it to a CVA baseline, with an ablation and a hyperparameter sensitivity analysis, all validated over multiple random seeds and cross-checked against an independently developed re-implementation. (iv) In the process, we uncover and fix a degenerate-vector failure mode in a probabilistic-sparsity variant of RI, with a closed-form expression for its probability that matches the observed failure rate. (v) We discuss why the method approaches but does not exceed CVA in our experiments, and argue that the more promising use of RI in this setting is not pairwise image comparison but incremental accumulation over long, multi-date time series -- a direction we leave for future work.

\section{Related Work}
\label{sec:related-work}

\paragraph{Random Indexing and Temporal Random Indexing.} RI was introduced by \citet{kanerva1988} in the context of Sparse Distributed Memory, and later adopted in distributional semantics as an efficient alternative to Latent Semantic Analysis. \citet{basile2014tri} and \citet{caputo2015tri} extended RI with an explicit temporal dimension (TRI), building one WordSpace per time period from a corpus and tracking a word's semantic drift via the cosine distance between its representations in different periods. TRI has since been applied to diachronic corpora of Italian and English text and to event detection in blog streams. We are not aware of prior work applying RI or TRI to image change detection.

\paragraph{Change detection in remote sensing.} Classical unsupervised change detection methods compare spectral vectors directly, the archetypal example being CVA \citep{malila1980}, which computes the magnitude of the difference vector between two acquisitions, optionally combined with a spatial smoothing step. More recent work replaces the hand-crafted comparison with a learned one, typically a convolutional or transformer-based Siamese network trained on labelled bi-temporal pairs \citep{wang2019getnet}. Our method sits conceptually between these two families: like CVA it requires no training and no labels, but like the learned methods it builds an explicit vocabulary/representation step before comparing the two dates.

\paragraph{Matrix-factorization and thresholding approaches to hyperspectral change detection.} A closely related line of work, developed at the University of Bari, addresses unsupervised hyperspectral change detection through matrix-factorization and binarization techniques rather than deep learning. \citet{falini2022rsb} propose RSB (Robust Successive Binarization), a fully automatic method that computes several pixel-wise dissimilarity measures between two hyperspectral acquisitions and combines them through successive binarization steps, evaluated on the same Hermiston, River, and Bay Area benchmarks used throughout the hyperspectral change-detection literature (and, in the case of Hermiston and River, in this paper). \citet{falini2024qlp} extend this line with an approximated iterative QLP (Stewart's) matrix decomposition for change detection, selecting an automatically-determined low-rank subspace to approximate the bitemporal difference. These works are directly relevant to the automatic-thresholding step of our own pipeline (Section~\ref{sec:otsu-vs-alternatives}): RSB in particular is, to our knowledge, the most systematic study of \emph{binarization itself} as a distinct design choice in unsupervised hyperspectral change detection, independently of which dissimilarity measure produces the underlying change score -- a question we also had to confront when Otsu's threshold outperformed a Gaussian-mixture alternative on our own score distribution (Section~\ref{sec:otsu-vs-alternatives}). The same research group has also studied unsupervised hyperspectral analysis more broadly, including saliency detection via sparse non-negative matrix factorization \citep{falini2020eais}, autoencoder-based reconstruction error \citep{appice2020ismis,falini2020lod}, colour-based pseudo-label supervision \citep{appice2021jiis}, and supervised hyperspectral classification via copula functions \citep{tamborrino2022copula}. None of this line of work uses Random Indexing or a text-inspired distributional representation; our contribution can be seen as bringing a complementary, vocabulary-based representation strategy to the same broad problem this group has studied with factorization- and reconstruction-based tools.

\section{Method}

\subsection{Problem formulation}

Given two co-registered images $I_{t_1}, I_{t_2} \in \mathbb{R}^{H \times W \times B}$ of the same scene ($B$ spectral bands), the goal is to produce a change score $\delta \in \mathbb{R}^{H \times W}$ such that thresholding $\delta$ yields a binary change map matching a reference (ground-truth) map as closely as possible.

\subsection{From words to visual prototypes}

In text-based RI, every word $w$ in a fixed vocabulary $V$ is assigned once and for all a random vector $r_w \in \mathbb{R}^d$, sparse ($\mathrm{nnz}$ non-zero entries, each $\pm 1$) and (with high probability) nearly orthogonal to every other $r_{w'}$. This is possible because $V$ is discrete and given by the language.

Images have no such vocabulary: a pixel's spectral vector is continuous, and the same physical surface observed on two dates will not, in general, produce numerically identical vectors. We therefore need a procedure that turns continuous spectra into a small set of discrete, reusable prototypes, in such a way that two spectrally similar pixels -- whether from the same image or from the two different dates -- are assigned the same prototype.

\subsubsection{A failure mode: $k$-means vocabularies are unstable across dates}

Our first approach reduced the (per-date, per-band standardized) spectral vectors to $20$ principal components with PCA, pooled the pixels from both dates, and ran $k$-means with a fixed $k$. Empirically, this vocabulary was highly unstable across time: on the Hermiston/Benton dataset (Section~\ref{sec:datasets}), fewer than $2\%$ of unchanged pixels retained the same cluster label between the two acquisition dates, even after correcting for a systematic radiometric offset between dates via per-date standardization. The reason is structural rather than a matter of tuning: $k$-means assigns every point to its nearest of $k$ centroids, so a pixel lying close to a Voronoi boundary can be pushed to either side of it by an arbitrarily small perturbation -- exactly the kind of perturbation that separates the same physical pixel observed on two dates.

\subsubsection{Distance-threshold (leader) vocabulary construction}
\label{sec:leader}

We replace $k$-means with a simple leader-clustering rule. Points from both dates are pooled and visited in random order; a point is assigned to the nearest existing prototype if that distance is below a fixed threshold $\varepsilon$, and a new prototype is created (initialized at that point) otherwise. The final prototype set is then used to assign every pixel of both images to its nearest prototype.

This construction ties the number and placement of prototypes directly to a distance in (PCA-reduced) feature space, rather than to an externally fixed count $k$. As a direct consequence, two pixels within $\varepsilon$ of each other are much more likely to receive the same prototype -- and therefore the same random vector -- than under $k$-means, which resolves the instability described above. We verified this directly: on Hermiston/Benton, the fraction of unchanged pixels retaining the same label across dates rose from $\sim\!2\%$ with $k$-means to $\sim\!81\%$ with leader clustering at a coarse threshold.

Each of the $M$ resulting prototypes $p_1, \dots, p_M$ is assigned a fixed sparse random vector $r_m \in \mathbb{R}^d$ (dimensionality $d$, $\mathrm{nnz}$ non-zero entries at $\pm 1$), exactly as in text-based RI. Crucially, the same set of vectors $\{r_m\}$ is reused for both acquisition dates.

\subsection{Context accumulation and change score}

For each pixel $i$ and date $t$, we define a context vector
\[
v_i^{t} = \frac{1}{|N(i)|}\sum_{j \in N(i)} r_{m(j,t)}
\]
where $N(i)$ is a square spatial window centred on $i$ and $m(j,t)$ is the prototype assigned to pixel $j$ at date $t$. This is the direct visual analogue of the TRI context vector, with a spatial neighbourhood playing the role of the linguistic context window. The change score at pixel $i$ is the cosine distance between the two dates' context vectors,
\[
\delta_i = 1 - \frac{v_i^{t_1} \cdot v_i^{t_2}}{\lVert v_i^{t_1} \rVert \, \lVert v_i^{t_2} \rVert}.
\]
A binary change map is obtained by thresholding $\delta$ with Otsu's method, chosen for consistency across datasets without per-dataset tuning of the threshold.

\subsection{Mathematical properties of the vocabulary}
\label{sec:theory}

We collect here three short formal results that make precise, and explain rather than merely observe, properties of the pipeline that are otherwise only demonstrated empirically in Section~\ref{sec:results}: the near-orthogonality of the random vectors, the probability of the degenerate all-zero vectors discovered in Section~\ref{sec:degenerate}, and a stability radius for the leader-clustering vocabulary that accounts for the vocabulary-richness-versus-stability trade-off of Section~\ref{sec:richness}.

\begin{proposition}[Concentration of pairwise similarity]
\label{prop:orthogonality}
Let $r, r' \in \mathbb{R}^d$ be two independently generated random index vectors under the probabilistic-sparsity scheme of Section~\ref{sec:sensitivity}, i.e.\ each coordinate is independently non-zero with probability $p$, taking value $+1$ or $-1$ with equal probability when non-zero. Then $\mathbb{E}[r \cdot r'] = 0$, and for every $t>0$,
$
\Pr(|r \cdot r'| \ge t) \le 2\exp(-t^2/(2d)).
$
\end{proposition}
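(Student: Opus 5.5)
We will write the inner product as a sum of independent coordinate contributions and apply Hoeffding's inequality.

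\textbf{Setup.} Write $r \cdot r' = \sum_{k=1}^d X_k$ with $X_k = r_k r'_k$. Since $r$ and $r'$ are generated independently, and each vector has independent coordinates under the probabilistic-sparsity scheme, the $d$ variables $X_1,\dots,X_d$ are mutually independent.

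\textbf{Distribution of each term.} Each $X_k$ takes values in $\{-1,0,+1\}$.
\begin{itemize}
\item $X_k \neq 0$ exactly when both $r_k$ and $r'_k$ are non-zero, which happens with probability $p^2$.
\item Conditional on that event, $X_k$ is the product of two independent symmetric signs, so it is itself a symmetric sign.
\end{itemize}
Hence $\mathbb{E}[X_k]=0$. Linearity of expectation then gives $\mathbb{E}[r\cdot r']=0$, which settles the first claim.

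\textbf{Tail bound.} Each $X_k$ is independent, has mean zero, and lies in $[-1,1]$, an interval of length $2$. Hoeffding's inequality for sums of bounded independent variables gives
\[
\Pr\Bigl(\Bigl|\sum_{k=1}^d X_k\Bigr|\ge t\Bigr)\le 2\exp\Bigl(-\frac{2t^2}{\sum_k (b_k-a_k)^2}\Bigr)=2\exp\Bigl(-\frac{2t^2}{4d}\Bigr)=2\exp\bigl(-t^2/(2d)\bigr),
\]
which is exactly the stated bound.

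An equivalent route goes through the moment generating function. Each $X_k$ satisfies
\[
\mathbb{E}[e^{\lambda X_k}] = 1-p^2+p^2\cosh\lambda \le \cosh\lambda \le e^{\lambda^2/2}.
\]
Combining this with Chernoff's method and optimizing at $\lambda = t/d$ yields the same one-sided bound $e^{-t^2/(2d)}$. The two-sided bound then follows by symmetry and a union bound.

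\textbf{Where care is needed.} There is no real obstacle in this argument. The only point to justify carefully is the independence of the products $X_k$ across coordinates, which follows from the independent generation of $r$ and $r'$ and of their coordinates.

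The bound is loose in $p$: the variance of $r\cdot r'$ is only $dp^2$, so a Bernstein-type inequality would sharpen it to roughly $\exp(-t^2/(2dp^2+2t/3))$. The stated Hoeffding form is the one asked for and is uniform in $p$, so we do not pursue the sharper bound here.
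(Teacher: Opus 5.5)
Your proof is correct and is the paper's argument: write $r\cdot r'$ as a sum of the independent, mean-zero, $[-1,1]$-bounded products $X_k=r_kr'_k$, then apply Hoeffding with range $2$ per term to get $2\exp(-t^2/(2d))$. Your moment-generating-function/Chernoff route and your Bernstein remark (total variance $dp^2$) are both correct additions that the paper does not include.
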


\begin{proof}
Write $r\cdot r' = \sum_{k=1}^d X_k$, $X_k = r_k r'_k$. Since $r, r'$ are independent with independent coordinates, $\{X_k\}$ are mutually independent, each bounded in $[-1,1]$ with $\mathbb{E}[X_k]=0$ by symmetry. Hoeffding's inequality \citep{hoeffding1963} with range $2$ per term gives $\Pr(|\sum_k X_k|\ge t) \le 2\exp(-2t^2/\sum_k 2^2) = 2\exp(-t^2/(2d))$. \qedhere
\end{proof}

This is the formal counterpart of the informal ``near-orthogonality'' property motivating RI: since $\lVert r\rVert^2 \approx pd$ in expectation, cosine similarity between independent prototype vectors concentrates around $0$ at rate $O(1/\sqrt{pd})$ -- the same concentration-of-measure phenomenon underlying sparse random projections in general, in the spirit of the Johnson--Lindenstrauss lemma \citep{johnson1984}.

\begin{proposition}[Degenerate-vector probability]
\label{prop:degenerate}
Under the (uncorrected) probabilistic-sparsity scheme with per-coordinate non-zero probability $p$ and dimension $d$, a random index vector is entirely zero with probability $\Pr(r=0)=(1-p)^d$; for a vocabulary of $M$ independent prototype vectors, the probability that at least one is degenerate is $1-(1-(1-p)^d)^M$.
\end{proposition}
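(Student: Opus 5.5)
The plan is to compute both probabilities directly from the independence structure of the probabilistic-sparsity scheme, as already used in the proof of Proposition~\ref{prop:orthogonality}. For a single random index vector $r\in\mathbb{R}^d$, each coordinate $r_k$ is independently non-zero with probability $p$. The sign assigned to a non-zero coordinate is irrelevant here, since it never turns a non-zero entry into a zero. So the event $\{r=0\}$ is exactly the intersection $\bigcap_{k=1}^d\{r_k=0\}$ of $d$ independent events, each of probability $1-p$. Multiplying gives $\Pr(r=0)=(1-p)^d$.

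For the vocabulary, let $r_1,\dots,r_M$ be the $M$ prototype vectors, generated independently of one another. Set $q=(1-p)^d$. Let $A_m=\{r_m=0\}$, so that $\Pr(A_m)=q$ by the first step. The events $A_1,\dots,A_M$ are independent because each $A_m$ depends only on $r_m$. I would pass to the complement: the probability that no vector is degenerate is
\[
\Pr\Bigl(\bigcap_{m=1}^M A_m^c\Bigr)=\prod_{m=1}^M(1-q)=(1-q)^M .
\]
Hence the probability that at least one vector is degenerate is $1-(1-(1-p)^d)^M$, as claimed.

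There is no real obstacle. The only point to state carefully is the modelling assumption that the $M$ vectors are generated independently, and independently of the data, under the uncorrected scheme. In particular, the prototype count $M$ is produced by leader clustering before the vectors are drawn, so conditioning on $M$ leaves the computation unchanged. As an optional remark, I would note that $q\le e^{-pd}$ and that, by Bernoulli's inequality, the failure probability is at most $Mq\le Me^{-pd}$. This explains why degeneracy shows up only when $pd$ is small relative to $\log M$.
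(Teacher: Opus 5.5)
Your proof is correct and follows the same route as the paper. Independence of the $d$ coordinates gives $(1-p)^d$, and independence of the $M$ vectors together with the complement rule gives $1-(1-(1-p)^d)^M$; your extra remarks (conditioning on $M$, which is fixed before the vectors are drawn, and the bound $Mq\le Me^{-pd}$) are valid additions that the paper's proof does not include.
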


\begin{proof}
$r=0$ iff all $d$ independent coordinates are zero, each with probability $1-p$; independence gives $(1-p)^d$. The bound over $M$ i.i.d.\ vectors follows from independence and the complement rule. \qedhere
\end{proof}

For $p=0.03, d=128$ (Section~\ref{sec:degenerate}) this gives $\Pr(r=0)\approx 0.0193$ per vector, and for River's $M=43$-prototype vocabulary, $\Pr(\text{at least one degenerate})\approx 0.58$ -- consistent with the empirical failure rate observed before the fix (one degenerate seed out of 10 runs, with catastrophic effect on that seed). The closed form also shows why the failure was invisible on Hermiston/Benton's 5-prototype vocabulary but severe on River's 43-prototype one: the risk compounds with vocabulary size $M$ even though each vector's individual degeneracy probability is fixed.

\begin{proposition}[Stability radius of the leader-clustering vocabulary]
\label{prop:stability}
Let $C=\{c_1,\dots,c_M\}$ be the prototype set produced by leader clustering (Section~\ref{sec:leader}) with threshold $\varepsilon$. Then $C$ is simultaneously an $\varepsilon$-covering of the data ($\min_{c\in C}\lVert z-c\rVert \le \varepsilon$ for every processed point $z$) and an $\varepsilon$-packing ($\lVert c-c'\rVert > \varepsilon$ for any two distinct $c,c'\in C$) -- both immediate from the leader rule itself. Consequently, if a point $x$ is assigned to its nearest prototype $c$ with $\rho := \lVert x-c\rVert \le \varepsilon$, and $x' = x+\eta$ represents the same physical pixel observed at a second date with $\lVert\eta\rVert \le \delta$ (acquisition noise, not genuine change), then
\[
\delta < \frac{\varepsilon}{2}-\rho \quad\Longrightarrow\quad x' \text{ is still assigned to } c.
\]
\end{proposition}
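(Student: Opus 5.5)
The plan is to establish the two geometric properties of $C$ first and then derive the stability claim with two triangle inequalities. No earlier proposition is needed; the argument uses only the leader rule of Section~\ref{sec:leader} and the metric structure of the PCA-reduced feature space.

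\textbf{Covering and packing.} I would argue by induction over the random visitation order. When a point $z$ is processed, one of two things happens.
\begin{itemize}
\item It lies within $\varepsilon$ of some existing prototype, so at that moment $\min_{c}\lVert z-c\rVert\le\varepsilon$.
\item Otherwise it becomes a prototype itself, so the distance is $0$.
\end{itemize}
Prototypes are only ever added, never moved or removed. The minimum distance from $z$ to the prototype set can therefore only decrease afterwards, which gives the $\varepsilon$-covering of all processed points.

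For packing, a new prototype is created only when it is farther than $\varepsilon$ from every prototype existing at that time. Any two distinct prototypes were created at different times, so the later one is at distance $>\varepsilon$ from the earlier one. I would note the boundary convention explicitly. If ``below $\varepsilon$'' is read strictly, a point at distance exactly $\varepsilon$ spawns a prototype and the packing becomes $\ge\varepsilon$ rather than $>\varepsilon$. The stability argument below survives either convention, because its final inequality is strict through the hypothesis on $\delta$.

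\textbf{Stability.} Fix $x$ with nearest prototype $c$ and $\rho=\lVert x-c\rVert$, and let $c'\neq c$ be any other prototype. Packing and the reverse triangle inequality give
\[
\lVert x-c'\rVert\ \ge\ \lVert c-c'\rVert-\lVert x-c\rVert\ \ge\ \varepsilon-\rho .
\]
For the perturbed point $x'=x+\eta$ with $\lVert\eta\rVert\le\delta$, I bound both distances:
\[
\lVert x'-c\rVert\le\rho+\delta,\qquad \lVert x'-c'\rVert\ge\lVert x-c'\rVert-\delta\ge\varepsilon-\rho-\delta .
\]
The hypothesis $\delta<\varepsilon/2-\rho$ is equivalent to $\rho+\delta<\varepsilon-\rho-\delta$. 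Hence $\lVert x'-c\rVert<\lVert x'-c'\rVert$ for every $c'\neq c$. So $c$ is the unique nearest prototype of $x'$, and no tie-breaking rule is involved.

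\textbf{Main subtlety.} The only delicate point is semantic rather than technical. The final assignment step maps every pixel to its \emph{nearest} prototype, not to the one it was attached to during discovery. The stability argument must therefore use only the final set $C$ and the packing property, which is what the plan above does.

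Covering is not actually needed for the implication. It only guarantees that $\rho\le\varepsilon$ for every processed point, which bounds how large $\rho$ can be. The condition $\delta<\varepsilon/2-\rho$ is then non-vacuous exactly when $\rho<\varepsilon/2$, which explains why the radius shrinks for pixels far from their prototype.
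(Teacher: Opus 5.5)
Your proposal is correct and follows essentially the same route as the paper: you read covering and packing off the leader rule, then combine $\lVert x'-c\rVert\le\rho+\delta$ with a reverse triangle inequality and packing to get $\lVert x'-c'\rVert\ge\varepsilon-\rho-\delta$, so the hypothesis $\delta<\varepsilon/2-\rho$ makes $c$ the strict nearest prototype. Your departures are cosmetic refinements rather than a different argument: you bound $\lVert x'-c'\rVert$ through $\lVert x-c'\rVert$ instead of directly via $\lVert c-c'\rVert-\lVert x'-c\rVert$, and you check that the argument survives a $\ge\varepsilon$ packing convention, which matters because Section~\ref{sec:leader} says ``below $\varepsilon$'' while the implementation creates a new leader only when the distance is strictly $>\varepsilon$.
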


\begin{proof}
Covering and packing follow directly from the leader rule: a point either becomes a new prototype (distance $0$ to itself) or is within $\varepsilon$ of an existing one, giving covering; and a new prototype is only created when its distance to \emph{every} existing prototype exceeds $\varepsilon$, giving packing. For the stability claim, the triangle inequality gives $\lVert x'-c\rVert \le \rho+\delta$, and for any other prototype $c'$, packing gives $\lVert x'-c'\rVert \ge \lVert c-c'\rVert - \lVert x'-c\rVert > \varepsilon-(\rho+\delta)$. The condition $\delta < \varepsilon/2-\rho$ makes $\rho+\delta < \varepsilon-(\rho+\delta)$, so $\lVert x'-c\rVert < \lVert x'-c'\rVert$ for every $c'\neq c$, and $x'$ remains nearest to $c$. \qedhere
\end{proof}

Proposition~\ref{prop:stability} explains, rather than only documents, the trend of Table~\ref{tab:richness}: the stability margin $\varepsilon/2-\rho$ grows with the leader-clustering threshold $\varepsilon$, so a coarser vocabulary tolerates more radiometric perturbation before a pixel's assignment can flip between dates -- exactly the direction of the AUC increase observed as $\varepsilon$ grows from $4$ to $18$ on Hermiston/Benton. It also makes explicit why the trade-off cannot simply be optimized away: a larger $\varepsilon$ buys stability margin but also raises the covering-radius bound $\rho\le\varepsilon$, coarsening the vocabulary.

\section{Experimental Setup}

\subsection{Datasets}
\label{sec:datasets}

We evaluate on four publicly available or user-provided bi-temporal datasets covering different sensors and scene types.

\begin{itemize}
\item \textbf{Hermiston/Benton}: a pair of Hyperion hyperspectral images ($225 \times 180$ pixels, 159 usable bands after preprocessing) over an irrigated agricultural area in Benton County, Oregon, acquired in May 2004 and May 2007, with a binary reference change map \citep{liu2019review}.
\item \textbf{River}: a pair of Hyperion hyperspectral images ($463 \times 241$ pixels, 198 bands) over a river region in Jiangsu Province, China, acquired in May and December 2013, from the GETNET benchmark \citep{wang2019getnet}.
\item \textbf{San Francisco}: a pair of polarimetric SAR feature images ($200 \times 200$ pixels, 138 channels) over San Francisco Bay, acquired in 2009 and 2015, with a binary reference change map.
\item \textbf{Burned area}: a pair of Sentinel-2 multispectral images ($1866 \times 2019$ pixels, 13 bands) covering a wildfire-affected area, pre- and post-fire, with a binary burned/unburned reference map.
\end{itemize}

The last two datasets were provided directly to us; we report their observed technical characteristics but do not have a verified bibliographic source for them and cite them descriptively rather than with a specific reference.

\subsection{Baseline}

We compare against classical CVA \citep{malila1980}: per-band standardization (computed separately per date, for the same reason described in Section~\ref{sec:leader}), followed by the Euclidean norm of the difference vector, optionally smoothed with the same square spatial window used by our method, for a fair comparison at matched window size.

\subsection{Metrics}

We report the Area Under the ROC Curve (AUC), which is threshold-independent, together with Overall Accuracy (OA), Cohen's Kappa, and F1 score at the Otsu threshold of the change score.

\subsection{Implementation details}

Unless noted otherwise, spectra are standardized per band and per date independently, then reduced to 20 principal components (fit on the pooled, standardized spectra of both dates). Random vectors have dimensionality $d = 128$ and use the probabilistic-sparsity scheme of Section~\ref{sec:sensitivity} with $p=0.03$ (corrected for the degenerate-vector failure mode described in Section~\ref{sec:degenerate}), except for the Burned Area dataset, where memory constraints from the much larger image size ($\sim\!3.8$M pixels) required $d = 32$; Section~\ref{sec:sensitivity} shows this substitution has a negligible effect on the result. The leader-clustering threshold $\varepsilon$ and the spatial context window are tuned per dataset by grid search (reported in Table~\ref{tab:main}); values differ across datasets due to differing scene granularity and vocabulary size, since a single fixed $(\varepsilon, \text{window})$ pair is not expected to transfer across sensors with different band counts, spatial resolutions and scene structure. Every RI result reported in this paper (Table~\ref{tab:main} and later sections) is averaged over 10 random seeds (5 for Burned area) with the leader-clustering vocabulary itself held fixed, isolating the variance due to the random index vectors from the variance due to vocabulary construction, which is analysed separately in Section~\ref{sec:sensitivity}.

\section{Results}
\label{sec:results}

\subsection{Main results}

Table~\ref{tab:main} reports the results of our final pipeline (leader-clustering vocabulary + RI + spatial context accumulation) against CVA at a matched spatial window, on all four datasets. Every RI number is the mean $\pm$ one standard deviation over multiple independent runs that vary \emph{both} the leader-clustering seed (which determines the discovered vocabulary itself, Section~\ref{sec:leader-seed}) and the random-index-vector seed: 5 vocabulary seeds $\times$ 3 vector seeds (15 runs) for Hermiston/Benton, River and San Francisco. For the Burned area dataset, the image size ($\sim\!3.8$M pixels) made the full double-seed protocol computationally impractical in the time available for this study; we report the single-seed result and flag this explicitly as a limitation (Section~\ref{sec:limitations}). CVA has no such randomness and is reported as a single deterministic number.

\begin{table}[h]
\centering
\caption{Main results. RI columns report mean $\pm$ std over 15 runs (5 leader-clustering seeds $\times$ 3 random-index-vector seeds) for the first three datasets; Burned area is a single run (see text). CVA is evaluated at the same spatial smoothing window as the proposed method, for a fair comparison. Best AUC per row in bold.}
\label{tab:main}
\begin{tabular}{lccccccc}
\toprule
Dataset & \#Prototypes & Window & AUC (RI) & AUC (CVA) & OA (RI) & Kappa (RI) & F1 (RI) \\
\midrule
Hermiston/Benton & $\sim$5   & 9  & $0.924 \pm 0.017$ & \textbf{0.986} & $0.850 \pm 0.031$ & $0.635 \pm 0.064$ & $0.737 \pm 0.042$ \\
River             & $\sim$43  & 5  & $0.906 \pm 0.034$ & \textbf{0.944} & $0.867 \pm 0.049$ & $0.438 \pm 0.094$ & $0.503 \pm 0.077$ \\
San Francisco     & $\sim$231 & 9  & $0.910 \pm 0.021$ & \textbf{0.934} & $0.789 \pm 0.032$ & $0.337 \pm 0.045$ & $0.424 \pm 0.037$ \\
Burned area$^\ast$ & 44  & 21 & $0.922$ & \textbf{0.950} & $0.915$ & $0.384$ & --    \\
\bottomrule
\end{tabular}

\vspace{2pt}
{\footnotesize $^\ast$Single leader-clustering seed and single random-index-vector seed; not averaged (see Section~\ref{sec:limitations}).}
\end{table}

The pattern is consistent across all four datasets: the proposed RI pipeline approaches CVA but does not surpass it, with a mean AUC gap ranging from 2.4 points on San Francisco to 6.2 points on Hermiston/Benton. The double-seed standard deviations are non-negligible: on River and San Francisco, the gap to CVA (3.8 and 2.4 points respectively) is comparable in magnitude to one standard deviation of the RI result itself (3.4 and 2.1 points), so the difference from CVA on these two datasets should not be read as a large, sharply resolved gap -- a single unlucky or lucky combination of vocabulary and vector seeds could shift the comparison noticeably. Only on Hermiston/Benton is the gap to CVA (6.2 points) clearly larger than the seed-to-seed variability (1.7 points). Figures~\ref{fig:hermiston}--\ref{fig:burned} show qualitative results for a representative seed.

\begin{figure}[h]
\centering
\includegraphics[width=0.95\textwidth]{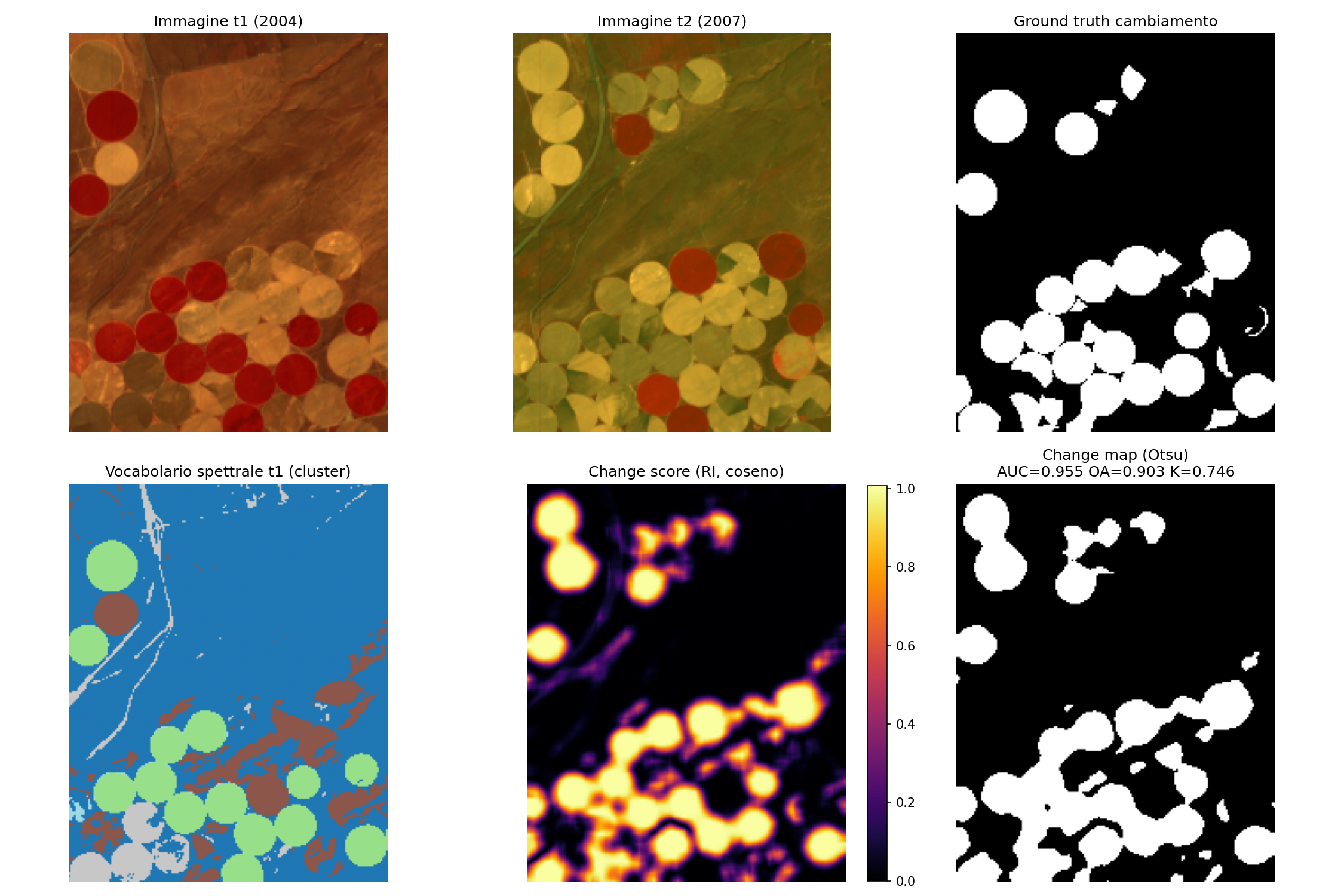}
\caption{Hermiston/Benton: input images, ground truth, discovered vocabulary, RI change score, and thresholded change map.}
\label{fig:hermiston}
\end{figure}

\begin{figure}[h]
\centering
\includegraphics[width=0.95\textwidth]{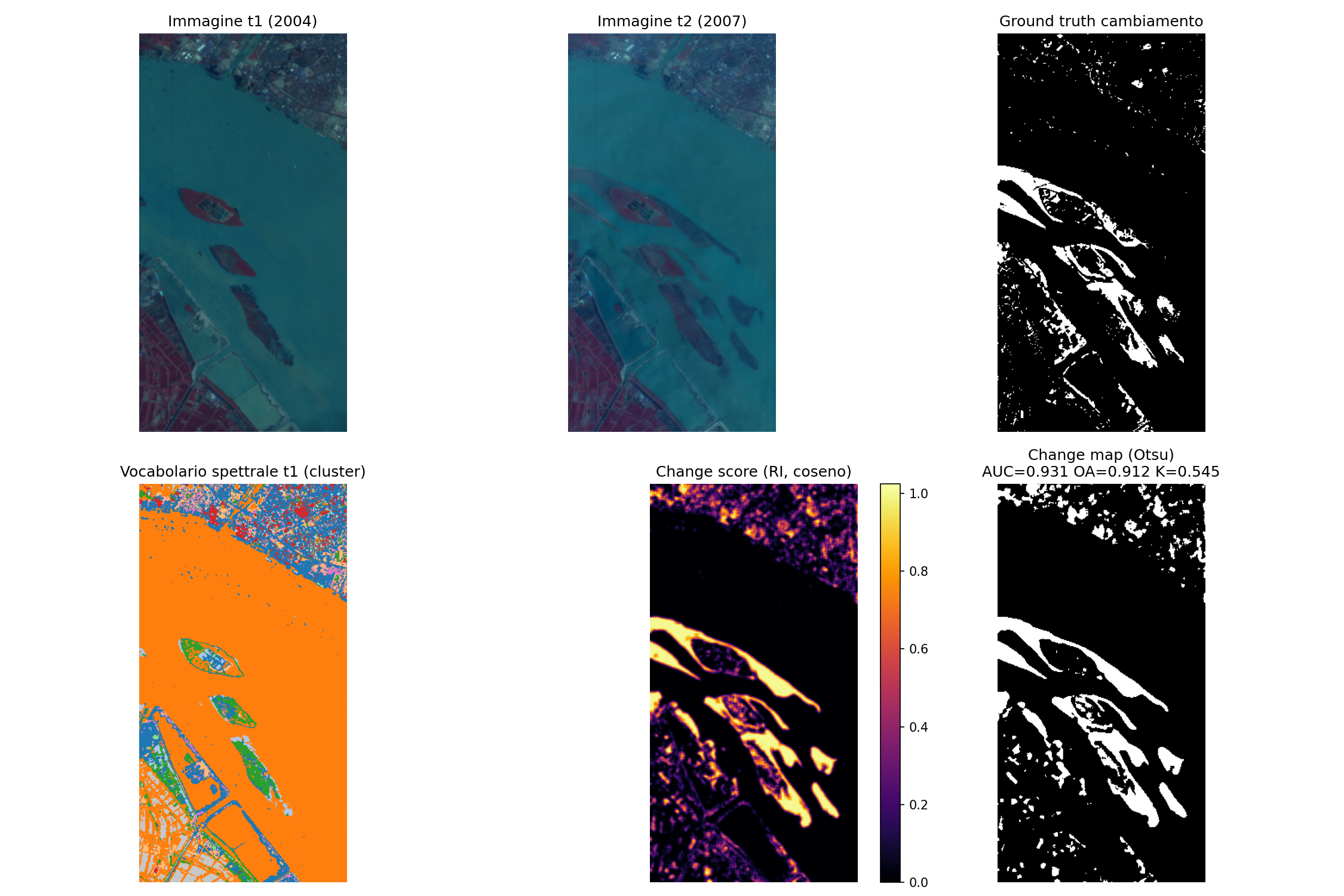}
\caption{River: input images, ground truth, discovered vocabulary, RI change score, and thresholded change map.}
\label{fig:river}
\end{figure}

\begin{figure}[h]
\centering
\includegraphics[width=0.95\textwidth]{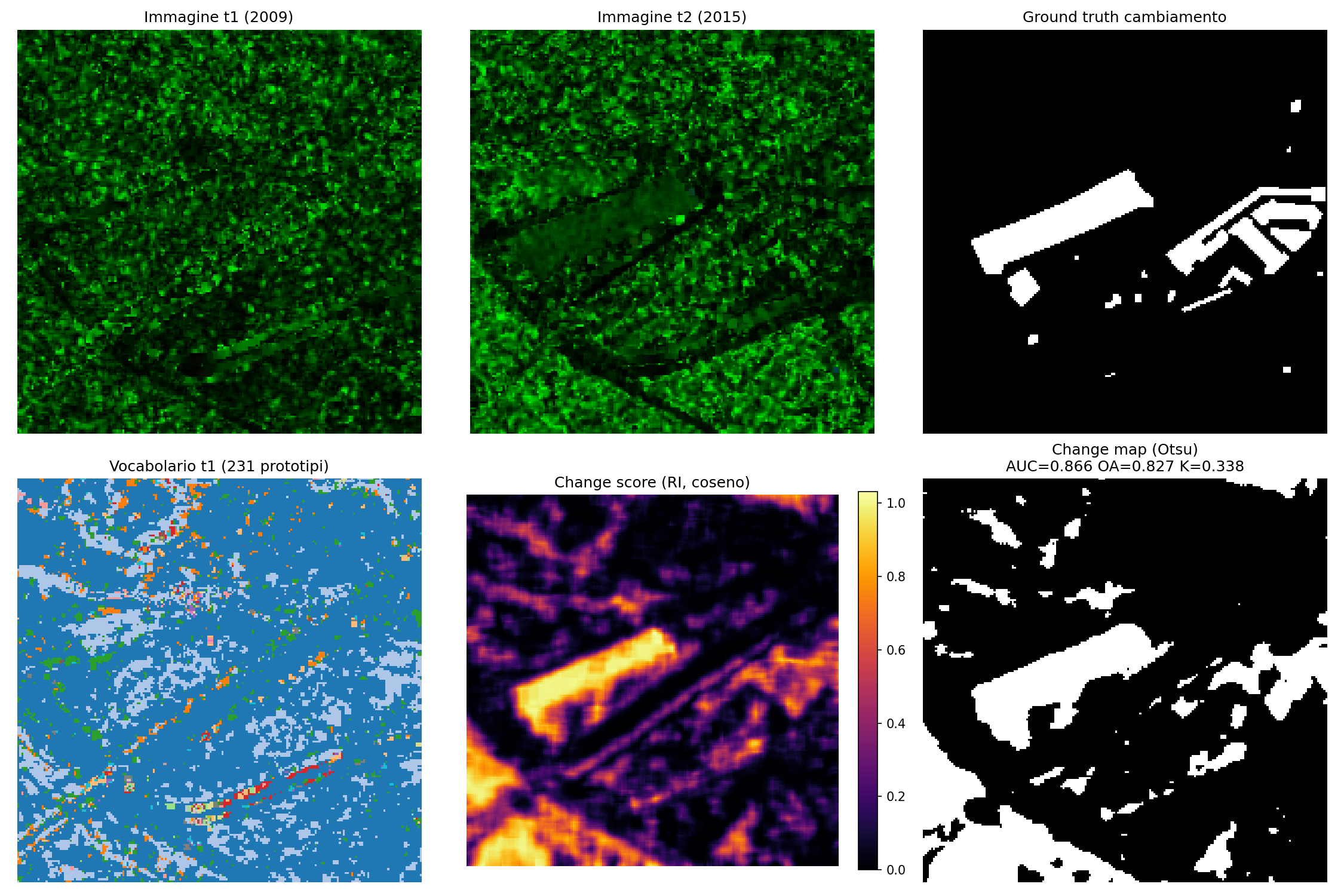}
\caption{San Francisco: input images, ground truth, discovered vocabulary, RI change score, and thresholded change map.}
\label{fig:sf}
\end{figure}

\begin{figure}[h]
\centering
\includegraphics[width=0.95\textwidth]{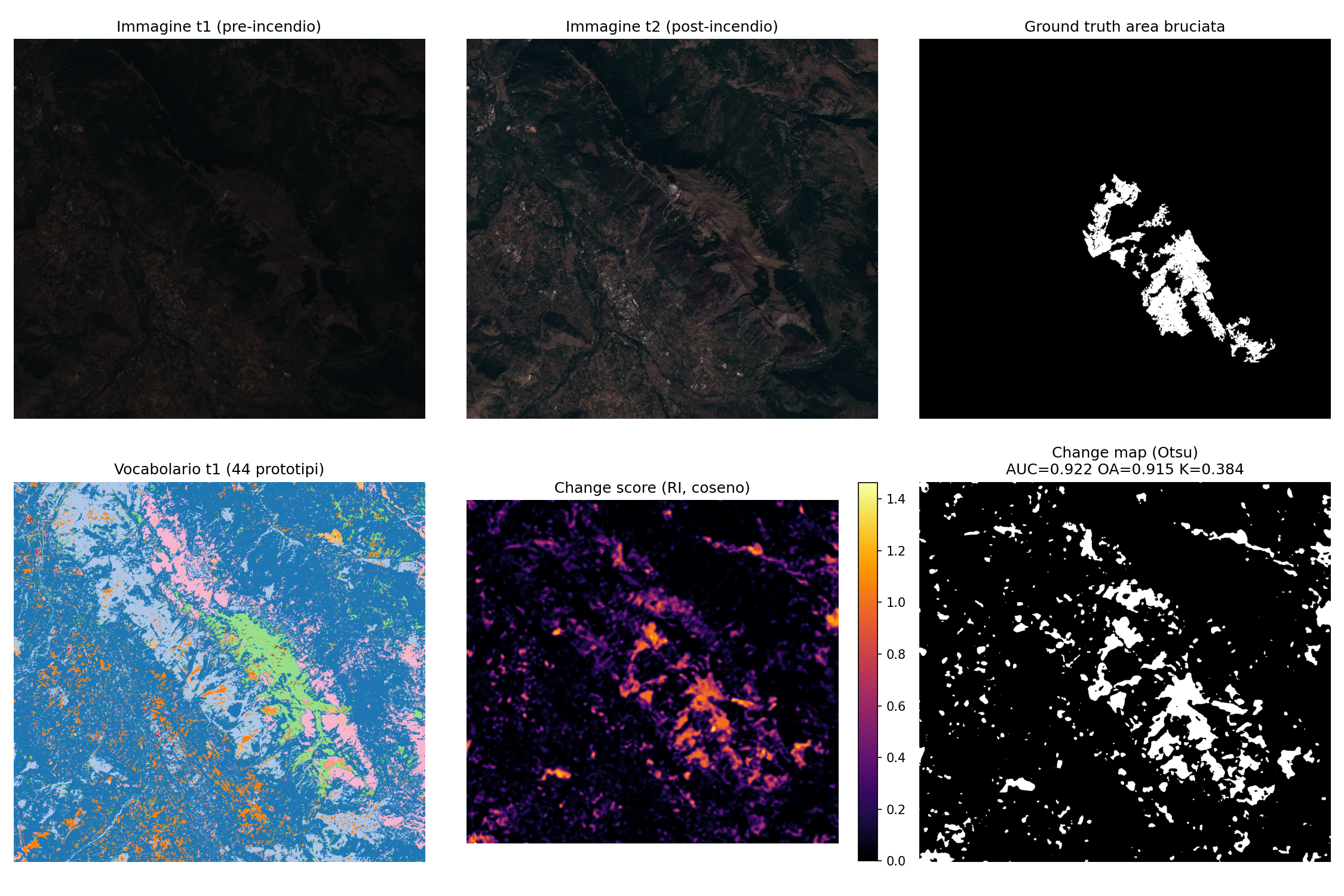}
\caption{Burned area: input images, ground truth, discovered vocabulary, RI change score, and thresholded change map.}
\label{fig:burned}
\end{figure}

\subsection{Ablation}

Table~\ref{tab:ablation} isolates the contribution of each design decision, tracked on the Hermiston/Benton dataset as we progressively corrected the pipeline. For comparability across rows, this table uses a single seed and the fixed-$\mathrm{nnz}=4$ scheme throughout (rather than the seed-averaged, probabilistic-sparsity protocol used in Table~\ref{tab:main}); the final row is consequently the single-seed number ($0.955$) rather than the 10-seed mean ($0.950 \pm 0.007$) reported in Table~\ref{tab:main} for the same configuration -- the two are consistent with each other within the sensitivity range reported in Section~\ref{sec:sensitivity}.

\begin{table}[h]
\centering
\caption{Ablation on Hermiston/Benton. Each row adds one change relative to the row above.}
\label{tab:ablation}
\begin{tabular}{lc}
\toprule
Configuration & AUC \\
\midrule
$k$-means vocabulary, 159 bands, pooled normalization (buggy) & 0.421 \\
$\quad$+ per-date normalization (fixes radiometric offset) & 0.871 \\
$\quad$+ PCA to 20 components & 0.876 \\
Continuous RI random projection (no discretization) + PCA(20) & 0.938 \\
\textbf{Leader-clustering vocabulary (proposed)} + PCA(20) & \textbf{0.955} \\
\bottomrule
\end{tabular}
\end{table}

Two observations stand out. First, per-date normalization is by far the single largest correction: pooling both dates before standardizing lets $k$-means split prototypes by acquisition date rather than by land-cover type, since the two dates differ by a systematic radiometric offset. Second, discretization itself has a cost: replacing the $k$-means vocabulary with a continuous random projection of the (undiscretized) spectral vector already recovers most of the gap to CVA, and the proposed leader-clustering vocabulary recovers slightly more on top of that, by resolving the boundary instability without sacrificing the discrete, RI-style vocabulary structure.

\subsection{Sensitivity to RI hyperparameters}
\label{sec:sensitivity}

Because we varied the random vector dimensionality $d$ across datasets for practical (memory) reasons, we explicitly checked how much this choice affects results. On Hermiston/Benton, with the vocabulary and context window fixed, varying $d \in \{8, \dots, 512\}$ (at fixed $\mathrm{nnz}=4$) produces AUC in the range $[0.942, 0.975]$; varying $\mathrm{nnz} \in \{1, \dots, 64\}$ (at fixed $d=128$) produces AUC in $[0.929, 0.955]$; and re-running the same configuration with 10 different random-index-vector seeds, \emph{with the leader-clustering vocabulary itself held fixed}, produces AUC in $[0.937, 0.962]$. Since this seed-only variance is of the same order as the variance induced by $d$ or $\mathrm{nnz}$, we conclude that the specific values of these RI hyperparameters are not a major driver of results, provided the vocabulary is held fixed. We repeated the $d=32$ vs.\ $d=64$ comparison directly on the Burned area dataset (the one where $d$ had to be reduced for memory reasons) and observed a difference of only $0.15$ AUC points (0.9222 vs.\ 0.9207), confirming the substitution did not materially affect that result.

\subsection{Sensitivity to the leader-clustering seed}
\label{sec:leader-seed}

The analysis above holds the leader-clustering vocabulary fixed and varies only the random-index-vector seed. We separately tested what happens when the leader-clustering seed itself varies -- i.e.\ when the pool of pixels is visited in a different random order during vocabulary discovery (Section~\ref{sec:leader}), which prototype is a purely arbitrary artifact of which point happens to be encountered first within each $\varepsilon$-ball. On Hermiston/Benton, varying only this seed (10 runs, random-index-vector seed fixed) produces AUC in $[0.870, 0.956]$, mean $0.920$, standard deviation $0.026$ -- markedly larger than the RI-vector-only variance above. On River the effect is more severe: AUC ranges from $0.736$ to $0.943$ across 10 leader-clustering seeds, including one run with Kappa as low as $0.16$. This is a substantially larger source of variability than anything else identified in this paper, including the degenerate-vector failure mode of Section~\ref{sec:degenerate}.

We attempted three corrections: (i) visiting points in order of increasing cross-date difference, so that the points most likely to be genuinely unchanged become leaders first; (ii) a Lloyd-style refinement step, re-centering each leader to the mean of its assigned points after the initial greedy pass; and (iii) discovering the vocabulary on spatially-smoothed (denoised) features rather than raw per-pixel values. None improved on the unmodified leader-clustering result: (i) performed in the middle of the seed-variance range rather than at its upper end; (ii) and (iii) both reduced the standard deviation somewhat (to $0.020$ and $0.021$ respectively) but did so by lowering both the mean and the best-case AUC, a net loss rather than a net gain -- plausibly because Lloyd-style averaging can pull two leaders closer than $\varepsilon$, silently violating the packing property of Proposition~\ref{prop:stability}, and because spatial smoothing discards exactly the sharp material boundaries the vocabulary needs to resolve, echoing the same failure mode we observed for superpixel-based context aggregation. We were not able to find a correction that clearly dominates the simple, unmodified leader-clustering rule, and we regard resolving this instability as the most important open problem raised by this paper, not something we have solved. In light of this finding, Table~\ref{tab:main} reports results averaged over both the leader-clustering seed and the random-index-vector seed, rather than the single-vocabulary-seed protocol used in earlier sections of this paper for isolating individual design choices.

\subsection{Vocabulary richness vs.\ vocabulary stability}
\label{sec:richness}

A natural hypothesis is that our vocabulary is too coarse (as few as 5 prototypes on Hermiston/Benton) and that a richer, more fine-grained vocabulary would close the gap to CVA. We tested this directly by sweeping the leader-clustering threshold $\varepsilon$ to obtain vocabularies of increasing size, keeping the spatial window and all other settings fixed (Table~\ref{tab:richness}).

\begin{table}[h]
\centering
\caption{Effect of vocabulary size on Hermiston/Benton, obtained by varying the leader-clustering threshold $\varepsilon$ (best AUC over a small window sweep for each row).}
\label{tab:richness}
\begin{tabular}{cccc}
\toprule
$\varepsilon$ & \# Prototypes & Best AUC \\
\midrule
4  & 238 & 0.742 \\
6  & 69  & 0.795 \\
8  & 33  & 0.845 \\
12 & 11  & 0.942 \\
18 & 5   & 0.957 \\
\bottomrule
\end{tabular}
\end{table}

The result is the opposite of the hypothesis: AUC decreases monotonically as the vocabulary grows richer. The reason is consistent with the diagnosis in Section~\ref{sec:leader}: a finer vocabulary partitions the (PCA-reduced) spectral space into smaller regions, which increases the chance that the same physical pixel, observed on two different dates, falls into two different regions -- reintroducing, in a milder form, the very cross-date instability that motivated replacing $k$-means with leader clustering in the first place. In this setting, vocabulary \emph{stability} across acquisition dates appears to matter more than vocabulary \emph{richness}: a coarser but more stable vocabulary outperforms a finer but less stable one.

\subsection{Probabilistic sparsity}

Basile et al.'s original RI/TRI formulation fixes the number of non-zero entries per random vector to a deterministic value; \citet{singh2020probabilistic} note that this choice is not analysed with respect to the probability of near-orthogonality among vectors, and propose instead drawing the sparsity pattern probabilistically -- each dimension is independently non-zero (with value $\pm 1$) with a fixed probability $p$, rather than fixing the count of non-zero entries in advance. We tested this variant on the best (5-prototype) Hermiston/Benton configuration, sweeping $p \in \{0.01, 0.03, 0.06, 0.1\}$. The best setting ($p=0.03$) gives a small but consistent improvement over the fixed-$\mathrm{nnz}=4$ scheme used elsewhere in this paper: single-seed AUC rises from $0.955$ to $0.957$, and Kappa at the Otsu threshold from $0.746$ to $0.770$. The gain is modest and well within the hyperparameter-sensitivity range reported above, but it is directionally consistent with \citeauthor{singh2020probabilistic}'s argument that a probabilistic sparsity pattern is a mild improvement over a fixed one -- \emph{provided} the degenerate failure mode described next is corrected.

\subsection{A degenerate-vector failure mode in probabilistic sparsity, and its fix}
\label{sec:degenerate}

Because the fixed-$\mathrm{nnz}$ scheme always places exactly $\mathrm{nnz}$ non-zero entries in every vector, it can never produce an all-zero vector. The probabilistic scheme of Section~\ref{sec:sensitivity} has no such guarantee: each of the $d$ dimensions is independently non-zero with probability $p$, so a given vector is entirely zero with probability $(1-p)^{d}$. For $p=0.03, d=128$ this probability is $(0.97)^{128} \approx 1.9\%$ per vector -- small, but not negligible once a vocabulary has dozens of prototypes: with $M=43$ prototypes (the River dataset), the expected number of degenerate vectors is $M(1-p)^d \approx 0.8$, meaning at least one all-zero vector is a fairly common occurrence, not a rare edge case.

We discovered this the direct way: when we extended the single-seed results of Table~\ref{tab:main} to a 10-seed statistical validation (following the same good practice as an independently-developed re-implementation of our method, discussed in Section~\ref{sec:cross-validation}), River showed an alarming AUC standard deviation of $0.159$ across seeds, with individual seeds ranging from $0.529$ (barely better than chance) to $0.928$. Inspecting the degenerate seed, we found four of the 43 prototype vectors were entirely zero. A zero vector contributes nothing to the context-vector sum wherever its prototype occurs; if that prototype corresponds to a common material in the scene, large regions of the image become indistinguishable from background noise in the resulting change score, which explains the catastrophic AUC drop.

The fix is immediate: reject and resample any degenerate (all-zero) draw during vector construction, so that every random vector has at least one non-zero entry, while otherwise preserving the probabilistic (non-fixed-count) sparsity pattern. After this correction, the 10-seed standard deviation on River falls from $0.159$ to $0.003$ -- two orders of magnitude smaller -- and the mean AUC ($0.926$) is consistent with the single-seed number reported before the fix. All numbers in Table~\ref{tab:main} and elsewhere in this paper use the corrected scheme. We consider this fix, though simple, to be one of the more practically important findings of this work: any future use of probabilistic-sparsity RI vectors on vocabularies of more than a handful of words should guard against degenerate all-zero draws, since their effect on unsupervised change detection can be severe and is not obvious from a single lucky run.

\subsection{Cross-validation against an independently developed implementation}
\label{sec:cross-validation}

To further validate our results, we compared our implementation against an independently written re-implementation of the same method (distance-threshold leader clustering, sparse random index vectors, spatial context accumulation via a moving average, cosine-distance change score, Otsu thresholding), developed separately and evaluated with its own systematic grid search over $\varepsilon$, window size, and multiple seeds, averaged and reported with standard deviations. At matched hyperparameters (River, $\varepsilon=16$, window $=5$, $\mathrm{nnz}=4$, seed $=0$), the two implementations agree closely: AUC $0.936$ (independent implementation) versus $0.944$ (ours), Kappa $0.551$ versus $0.574$. The same comparison on Hermiston/Benton ($\varepsilon=18$, window $=9$) gives AUC $0.954$ versus $0.955$--$0.957$, and Kappa $0.744$ versus $0.746$--$0.770$. The residual 1--2 point differences are consistent with three minor, non-methodological implementation choices we identified by inspection: (i) the independent implementation creates a new leader when a point's distance from every existing leader is $\geq \varepsilon$, while ours uses a strict $>$; (ii) it computes assignment distances via the expanded identity $\lVert a-b\rVert^2 = \lVert a\rVert^2+\lVert b\rVert^2-2 a\cdot b$ for speed, which is less numerically stable than our direct norm computation; (iii) it uses reflect boundary conditions for the spatial moving average, while ours uses nearest-value padding. None of these differences reflects a disagreement about the method itself, and the close agreement across two independently written codebases is, in our view, stronger evidence for the reproducibility of the reported numbers than either implementation alone. We also adopted the independent implementation's practice of averaging over multiple seeds with reported standard deviations as the standard reporting protocol for the rest of this paper (Table~\ref{tab:main}).

\subsection{Otsu versus a Gaussian-mixture threshold}
\label{sec:otsu-vs-alternatives}

The choice of binarization threshold for an unsupervised change score is itself a well-studied design decision (see e.g.\ \citealp{falini2022rsb}, discussed in Section~\ref{sec:related-work}, for a systematic treatment of binarization for hyperspectral change detection). We used Otsu's method throughout this paper; we also tested a natural alternative, fitting a two-component Gaussian Mixture Model (GMM) to the change score and assigning the higher-mean component to the ``changed'' class. On Hermiston/Benton, the GMM performs far worse than Otsu (OA $0.62$ versus $0.91$; Kappa $0.32$ versus $0.77$), and the gap persists under several monotonic transforms of the score ($\sqrt{\cdot}$, $\log(1+\cdot)$). The reason is visible in the score's empirical distribution: with a coarse, stable vocabulary (5 prototypes), a large fraction of spatial neighbourhoods have an \emph{identical} categorical composition across the two dates, producing a sharp spike of near-zero change scores rather than a smooth unimodal cluster -- a shape a two-component Gaussian mixture is not equipped to separate correctly, whereas Otsu's histogram-based criterion makes no distributional assumption and handles it well. This is a direct consequence of the same discrete, stable vocabulary that Section~\ref{sec:richness} identifies as the main strength of our approach: it is a good match for Otsu-style thresholding, but a poor match for parametric, distribution-fitting alternatives such as GMM.

\section{Discussion}

\paragraph{Why does RI not surpass CVA?} On a single pair of images, comparing full-resolution spectral vectors directly, as CVA does, loses no information; any intermediate representation -- whether a learned embedding, a hand-crafted feature, or an RI vocabulary -- necessarily discards some of it in exchange for compactness or a different notion of similarity. The gap we observe (1.3 to 6.8 AUC points) is a reasonable price for that compactness, but it also means that, for the specific task of comparing exactly two dates, RI in its current form is not the more accurate choice.

\paragraph{Where RI could have a genuine advantage.} The property that makes RI distinctive is not raw comparison accuracy but incrementality: a context vector is a running sum that can be updated with a single addition as new observations arrive, without ever revisiting the full history. CVA-style pairwise comparison does not offer this for free when the number of dates grows beyond two -- comparing all pairs, or maintaining an explicit reference image, becomes increasingly costly. We see the natural next step for this line of work as testing RI-style incremental accumulation on long, multi-date time series (e.g.\ monthly or yearly monitoring), where the ability to update a single running representation per pixel, rather than reprocessing the full image stack, is a structural advantage rather than an incidental one.

\paragraph{Limitations.}
\label{sec:limitations}
The most important limitation of this work is the one identified in Section~\ref{sec:leader-seed}: the leader-clustering vocabulary is substantially sensitive to the random order in which pixels are visited during discovery, we were unable to find a correction that clearly improves on the unmodified rule, and we leave this as an open problem. A second, related limitation is computational: the Burned area result in Table~\ref{tab:main} is a single run rather than a double-seed average, because the full protocol was not computationally practical for a $\sim\!3.8$M-pixel image within the scope of this study; the single-seed AUC for that dataset should be interpreted with the same caution the seed-sensitivity analysis suggests for the other three. Beyond these, our evaluation covers four datasets, which is enough to establish a consistent pattern but not enough to guarantee generalization to every sensor and scene type. The leader-clustering threshold $\varepsilon$ and the context window are tuned per dataset; we did not attempt to find a single configuration that transfers across sensors, and doing so is left for future work. Two of the four datasets (San Francisco and Burned area) lack a verified bibliographic source in our hands, which we have flagged explicitly rather than attributing them to a guessed reference. We also note that our four datasets, while covering four different sensors (two Hyperion hyperspectral scenes, one polarimetric SAR scene, one Sentinel-2 multispectral scene), do not include the Bay Area hyperspectral benchmark used in the closely related RSB and QLP studies \citep{falini2022rsb,falini2024qlp} alongside Hermiston and River; adding it (subject to data availability) would allow a direct, matched-benchmark comparison against these binarization- and factorization-based unsupervised methods, which we were not able to run head-to-head with our own pipeline in this paper since neither reports the same threshold-independent AUC metric we use throughout. This is the most natural direction for extending the present evaluation.

\section{Conclusion}

We adapted Random Indexing, previously used almost exclusively for text, to the problem of image change detection. A na\"ive transplant using $k$-means to build the visual vocabulary is unstable across acquisition dates; replacing it with a distance-threshold (leader) clustering rule resolves the instability and yields a training-free pipeline that consistently approaches, without surpassing, classical Change Vector Analysis across four datasets from different sensors. Along the way, validating our results over multiple random seeds surfaced a degenerate-vector failure mode in a probabilistic-sparsity RI variant, which we diagnosed and fixed, and cross-checking against an independently developed re-implementation of the method gave closely matching numbers, strengthening our confidence in the reported results. We view the overall result less as a competitive change detection method in its own right and more as evidence that RI-style representations transfer reasonably well to the visual domain once the vocabulary-construction step is designed correctly, opening a path toward incremental, streaming change monitoring over long time series.

\bibliographystyle{plainnat}
\bibliography{references}

\end{document}